\documentclass[11pt]{article}

\usepackage{amsmath,amssymb,amsthm,mathrsfs}
\usepackage{enumitem}
\usepackage{geometry}
\numberwithin{equation}{section}
\newtheorem{conjecture}{Conjecture}
\newtheorem{theorem}{Theorem}[section]
\newtheorem{lemma}[theorem]{Lemma}
\newtheorem{proposition}[theorem]{Proposition}
\newtheorem{corollary}[theorem]{Corollary}
\newtheorem{remark}[theorem]{Remark}

\DeclareMathOperator*{\argmax}{arg\,max}

\newcommand{\vertk}{\stackrel{{\cal D}}{\longrightarrow}}
\newcommand{\edist}{\stackrel{{\cal D}}{=}}
\title{The location of the largest exponential spacing and Euler’s generalized pentagonal numbers}
\author{Norbert Henze\footnote{Institute of Stochastics, Karlsruhe Institute of Technology (KIT), 76131 Karlsruhe, Germany. e-mail:
henze@kit.edu}}
\date{}

\begin{document}
\maketitle

\begin{abstract}
We study the location of the largest spacing generated by the order
statistics of a sample from the standard exponential distribution.
Although the asymptotic behaviour of the largest spacing itself is well
understood, considerably less is known about the index at which it is
attained. Using the independence and unequal rates of exponential
spacings, we derive exact finite-sample formulas and show that, when
measured relative to the right endpoint, the location of the largest
spacing converges in distribution to a non-degenerate probability
distribution on the positive integers. We obtain explicit integral
representations for the limiting probabilities and, using Euler's
pentagonal number theorem, derive a series representation involving the
generalized pentagonal numbers. This reveals that the Euler product
appearing in the limiting distribution of the largest exponential
spacing also governs the distribution of its location. The convergence
result is also extended to the location of the largest
$m$-spacing for every fixed $m\geq 1$, despite the dependence among
overlapping $m$-spacings. Numerical values
illustrate the concentration of the limiting distribution near the right
endpoint and the rapid convergence of the finite-sample probabilities.
\end{abstract}

\bigskip
\noindent\textbf{MSC 2020:}  Primary 60G70; Secondary 60F05, 62G30.

\noindent\textbf{Keywords:}  Exponential spacings; order statistics; argmax distribution; Euler’s generalized pentagonal numbers.

\section{Introduction}
The study of spacings generated by order statistics of independent identically distributed random variables with a common Lebesgue density $f$ has a long history
in probability theory (see, e.g., \cite{pyke65}). If $f$ is the density of the uniform distribution on $[0,1]$, asymptotic theory for the largest spacing has been
given by \cite{levy39}, \cite{deh82} and \cite{dev82}, and generalizations for certain non-uniform densities can be found in \cite{deh84}.
When $f$ is the density of an exponential distribution, \cite{devroye84} derived the limit distribution of the largest spacing and obtained corresponding large deviation probabilities
and laws of the iterated logarithm.  In a subsequent paper, \cite{henze87} derived an explicit representation of the limiting distribution of
the largest exponential spacing as a two-sided infinite series involving Euler's pentagonal numbers. While the asymptotic behaviour of the largest
exponential spacing is well understood, little seems to be known about the  index at which this maximum is attained. The present note is devoted to this problem.

To be specific, let
$X_1,\ldots,X_n$ be independent standard exponential random variables, and  write $X_{1:n} \le X_{2:n} \le  \cdots  \le X_{n:n}$
for  the order statistics of $X_1,\ldots,X_n$. Then the spacings $S_{j,n} := X_{j:n}-X_{j-1:n}$ for $j=1,\ldots,n$, where $X_{0:n} :=0$,
form an independent sequence, with $S_{j,n}$ having an exponential distribution with expectation $1/(n-j+1)$, $j=1,\ldots,n$.
This remarkable representation due to \cite{suk37} (see also \cite{pyke65}) makes exponential spacings
a natural and tractable model for the study of extremes of non–identically distributed random variables.
As mentioned above, considerable attention has been devoted to the size of the largest spacing
\[
M_n := \max_{1\le j \le n} S_{j,n}.
\]
While the asymptotic behaviour of $M_n$ is by now well understood,
a closely related and equally natural question appears to have received little explicit attention:
At which position does the largest spacing occur?
More precisely, if
\[ 
I_n := \argmax_{1\le j \le n} S_{j,n},
\] 
what can be said about the distribution of $I_n$
and its asymptotic behaviour as $n \to \infty$?

We show that the position of the largest
exponential spacing admits a non-degenerate limiting distribution when measured relative to the right endpoint. Writing
\begin{equation}\label{defrn}
R_n :=n-I_n+1
\end{equation} for the random position of $I_n$ counted from the largest order statistic, we prove that, for each fixed integer
$r$,
\[
\lim_{n\to \infty} \mathbb{P}(R_n = r) = p_r,
\]
where $(p_r)_{r\ge 1}$  is a genuine probability distribution on the positive integers.

Our approach is probabilistic and relies on a reduction to an infinite array of independent exponential
random variables with increasing rates. This construction shows that the limiting argmax distribution
arises from a well-defined extremal problem for a non-identically distributed sequence,
and that the maximizer exists and is unique almost surely. Explicit integral representations for the probabilities
$p_r$ are derived, allowing for sharp finite-sample approximations.
An interesting by-product of the analysis is that the same infinite product
which governs the limiting distribution of the largest spacing itself (see \cite{devroye84}, \cite{henze87})
also governs the distribution of its maximizer. As a consequence, Euler’s generalized pentagonal numbers reappear
naturally in an explicit series representation of the limiting argmax probabilities.
This shows that the arithmetic structure previously observed for the size of the maximal spacing
 also controls the location at which this maximum is attained.

The paper is organized as follows. Section~\ref{sec:model} reduces the problem
 to an extremal question for independent exponential variables with increasing rates.
 In Section~\ref{sec:finiten} we derive exact finite-sample expressions for the distribution of the maximizer.
 Section~\ref{sec:limit} establishes the existence and normalization of the limiting argmax distribution.
 Analytic representations and the connection with Euler’s generalized pentagonal numbers are developed in Section~\ref{sec:pentagonal}.
 Numerical illustrations of the limiting probabilities and their convergence are presented in Section~\ref{sec:numerics}.
 Section~\ref{sec:discussion} extends the convergence result to fixed $m$-spacings and
formulates a conjecture concerning the mode of the resulting limiting
distribution.


\section{Probabilistic reduction}\label{sec:model}


Throughout the paper we use the notation and assumptions introduced in
Section~1. In particular, the spacings
$S_{1,n},\dots,S_{n,n}$ are independent with
$S_{j,n}\edist \mathrm{Exp}(n-j+1)$, where $\edist$ denotes equality in distribution. 

For analytical convenience, we reverse the indexing and define
\[
Y_r:=S_{n-r+1,n},\qquad r=1,\dots,n.
\]
Then $Y_1,\dots,Y_n$ are independent with $Y_r\edist \mathrm{Exp}(r)$ for $r=1,\dots,n$.

Let $R_n$ as defined in \eqref{defrn}
denote the (almost surely unique) position of the largest spacing. Clearly,
\begin{equation}\label{eq:prn_as_winner}
\mathbb P(R_n = r)  = \mathbb P\!\left(Y_r=\max_{1\le k\le n}Y_k\right),
\qquad 1\le r\le n.
\end{equation}

Representation~\eqref{eq:prn_as_winner} shows that the problem reduces to a
``winner'' question for a triangular array of independent, but non-identically
distributed, exponential random variables with increasing rates. This formulation
will be the basis for the finite-sample analysis in Section~\ref{sec:finiten}.

For the asymptotic theory it is natural to embed $Y_1,\dots,Y_n$ into an infinite
sequence $(Y_r)_{r\ge1}$ of independent exponential random variables satisfying
\begin{equation}\label{eq:infinite_array}
Y_r\edist \mathrm{Exp}(r),\qquad r\ge1.
\end{equation}
The limiting distribution of the maximizer is derived from this infinite-array
construction in Section~\ref{sec:limit}.


\section{Finite-sample distribution of the reverse index}\label{sec:finiten}


In this section we derive exact expressions for the distribution of the reverse index  $R_n$ for fixed $n$. Recall that
$Y_1,\dots,Y_n$  are independent with $Y_r\edist \mathrm{Exp}(r)$,
and that
\begin{equation}\label{gl:defprn}
p_{r,n} :
=\mathbb P\!\left(R_n=r \right)
=\mathbb P\!\left(Y_r=\max_{1\le k\le n}Y_k\right),
\qquad 1\le r\le n.
\end{equation}

\begin{proposition}\label{prop:prn_integral}
For $1\le r\le n$, the probability $p_{r,n}$ admits the integral representation
\begin{equation}\label{eq:prn_integral}
p_{r,n}
=\int_0^\infty r {\rm e}^{-r t}
\prod_{\substack{1\le k\le n\\k\neq r}}
\bigl(1-{\rm e}^{-k t}\bigr)\,{\rm d}t.
\end{equation}
Equivalently,
\begin{equation}\label{eq:prn_u_integral}
p_{r,n}
=\int_0^1 r\,u^{r-1}
\prod_{\substack{1\le k\le n\\k\neq r}}
(1-u^k)\,{\rm d}u.
\end{equation}
\end{proposition}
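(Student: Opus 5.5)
The plan is to condition on the value of $Y_r$ and use independence. By \eqref{gl:defprn}, and since ties among the $Y_k$ occur with probability zero (the $Y_k$ are independent and absolutely continuous), we have
\[
p_{r,n}=\mathbb P\bigl(Y_k<Y_r \text{ for all } k\neq r,\ 1\le k\le n\bigr).
\]
Conditioning on $Y_r=t$, the events $\{Y_k<t\}$, $k\neq r$, are independent. Each has probability $1-{\rm e}^{-kt}$ because $Y_k\edist\mathrm{Exp}(k)$. Fubini's theorem then gives
\[
p_{r,n}=\int_0^\infty f_{Y_r}(t)\prod_{k\neq r}\mathbb P(Y_k<t)\,{\rm d}t,
\]
where $f_{Y_r}(t)=r{\rm e}^{-rt}$ for $t>0$. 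This is exactly \eqref{eq:prn_integral}. Equivalently, one may write $p_{r,n}=\mathbb E\bigl[\prod_{k\neq r}(1-{\rm e}^{-kY_r})\bigr]$ and integrate against the density of $Y_r$.

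For \eqref{eq:prn_u_integral}, substitute $u={\rm e}^{-t}$. This is a decreasing bijection from $(0,\infty)$ onto $(0,1)$, and ${\rm d}t=-{\rm d}u/u$. Under it, $r{\rm e}^{-rt}\,{\rm d}t$ becomes $r u^{r-1}\,{\rm d}u$ with the limits reoriented to $\int_0^1$, and each factor $1-{\rm e}^{-kt}$ becomes $1-u^k$.

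There is no real obstacle. The only points needing a word of justification are that ties have probability zero, so the argmax is almost surely unique and the event can be written with strict inequalities, and that Fubini applies because the integrand is nonnegative and bounded by $r{\rm e}^{-rt}$.
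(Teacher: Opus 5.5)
Your proposal is correct and follows the paper's proof: condition on $Y_r$, use independence together with $\mathbb P(Y_k<t)=1-{\rm e}^{-kt}$ to get the first integral, then substitute $u={\rm e}^{-t}$. The justifications you add (ties have probability zero, and Fubini applies to the nonnegative integrand bounded by $r{\rm e}^{-rt}$) are left implicit in the paper.
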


\begin{proof}
Since the random variables $Y_1,\dots,Y_n$ are independent and continuously
distributed, conditioning on the value of $Y_r$ yields
\[
p_{r,n}
=\int_0^\infty f_r(t)\,
\mathbb P\!\left(Y_k<t \text{ for all } k\neq r\right)\,{\rm d}t,
\]
where $f_r(t)=r {\rm e}^{-r t}$, $t >0$,  is the density of $Y_r$.
Using independence and $\mathbb P(Y_k<t)=1-{\rm e}^{-k t}$ for $k\neq r$, we obtain
\eqref{eq:prn_integral}. The transformation $u={\rm e}^{-t}$ gives
\eqref{eq:prn_u_integral}.
\end{proof}

It is convenient to express~\eqref{eq:prn_u_integral} in terms of the function
\[
\Phi_n(u):=\prod_{k=1}^n(1-u^k),\qquad 0<u<1,
\]
which converges pointwise to Euler's product $\prod_{k=1}^\infty (1-u^k)$ as $n \to \infty$.

\begin{corollary}\label{cor:prn_phi}
For $1\le r\le n$,
\begin{equation}\label{eq:prn_phi}
p_{r,n}
=\int_0^1 r\,u^{r-1}\frac{\Phi_n(u)}{1-u^r}\,{\rm d}u.
\end{equation}
\end{corollary}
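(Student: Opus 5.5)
This follows directly from the second representation \eqref{eq:prn_u_integral} in Proposition~\ref{prop:prn_integral}. The plan is to rewrite the product over $k\neq r$ in terms of $\Phi_n$ and then substitute.

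For fixed $u\in(0,1)$ and $1\le r\le n$ we have $0<u^r<1$, so the factor $1-u^r$ is strictly positive. Since $r$ is one of the indices $1,\dots,n$, the full product splits as
\[
\Phi_n(u)=(1-u^r)\prod_{\substack{1\le k\le n\\k\neq r}}(1-u^k).
\]
Dividing by $1-u^r$ gives
\[
\prod_{\substack{1\le k\le n\\k\neq r}}(1-u^k)=\frac{\Phi_n(u)}{1-u^r},\qquad 0<u<1.
\]

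Substituting this identity into the integrand of \eqref{eq:prn_u_integral} gives \eqref{eq:prn_phi}. The identity holds on the open interval $(0,1)$, and the endpoints $u=0$ and $u=1$ form a Lebesgue null set, so the integral is unchanged. There is also no integrability issue at $u=1$: the quotient $\Phi_n(u)/(1-u^r)$ equals a finite polynomial product, so it extends continuously to $[0,1]$.

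No step poses a genuine obstacle. The only point needing care is that the division by $1-u^r$ is legitimate, which holds because $1-u^r>0$ on $(0,1)$.
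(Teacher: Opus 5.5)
Your proof is correct and is the same as the paper's: both split $\Phi_n(u)=(1-u^r)\prod_{1\le k\le n,\,k\neq r}(1-u^k)$ and substitute the resulting quotient into \eqref{eq:prn_u_integral}. Your extra remarks (that $1-u^r>0$ on $(0,1)$ and that the endpoints form a null set) are valid, though the paper does not spell them out.
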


\begin{proof}
Since
\[
\prod_{\substack{1\le k\le n\\k\neq r}}(1-u^k)
=\frac{\prod_{k=1}^n(1-u^k)}{1-u^r}
=\frac{\Phi_n(u)}{1-u^r},
\]
equation~\eqref{eq:prn_phi} follows immediately from
\eqref{eq:prn_u_integral}.
\end{proof}

As a consistency check, we note that the values $\{p_{r,n}:1\le r\le n\}$
form a probability distribution.

\begin{remark}\label{rem:normalization_finite}
Since with probability one there exists a unique maximizer among
$Y_1,\dots,Y_n$, it follows that
$
\sum_{r=1}^n p_{r,n}=1$.
\end{remark}

Representations~\eqref{eq:prn_integral}--\eqref{eq:prn_phi} will serve as the
starting point for the asymptotic analysis carried out in the next section.


\section{Limiting distribution of the reverse index}\label{sec:limit}

In this section, we derive the limiting distribution of the reverse index $R_n$. The analysis is based on an infinite-array embedding of the
finite-sample model introduced in Section~\ref{sec:model}. To this end,
let $(Y_r)_{r\ge1}$ be a sequence of independent random variables satisfying \eqref{eq:infinite_array}.
For $r\ge1$, define
\begin{equation}\label{eq:pr_def}
p_r:=\mathbb P\!\left(Y_r=\max_{k\ge1}Y_k\right).
\end{equation}

We first show that the maximization problem on the right-hand side of
\eqref{eq:pr_def} is well posed.

\begin{lemma}\label{lem:existence_unique}
With probability one, there exists a unique index $R\geq 1$ such that
\[
Y_R=\max_{k\geq 1}Y_k<\infty.
\]
\end{lemma}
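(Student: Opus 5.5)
The plan is a Borel--Cantelli argument showing that only finitely many $Y_k$ can be large, followed by a continuity argument for uniqueness.

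\emph{Step 1: only finitely many $Y_k$ exceed a fixed level.} For a fixed $\varepsilon>0$,
\[
\sum_{k\ge1}\mathbb P(Y_k>\varepsilon)=\sum_{k\ge1}{\rm e}^{-k\varepsilon}=\frac{{\rm e}^{-\varepsilon}}{1-{\rm e}^{-\varepsilon}}<\infty .
\]
By the first Borel--Cantelli lemma, almost surely $Y_k\le\varepsilon$ for all but finitely many $k$. Intersecting over $\varepsilon=1/m$, $m\in\mathbb N$, gives $Y_k\to0$ almost surely.

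\emph{Step 2: the supremum is attained and finite.} Since $Y_1>0$ almost surely, apply Step 1 with the random level $Y_1$. Outside a null set, $Y_k\to0$, so there is a finite (random) $K$ such that $Y_k<Y_1$ for all $k>K$. Hence
\[
\sup_{k\ge1}Y_k=\max_{1\le k\le K}Y_k ,
\]
which is a maximum over finitely many finite random variables. It is therefore attained and is $<\infty$ almost surely.

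\emph{Step 3: uniqueness.} For $j\ne k$, the variables $Y_j$ and $Y_k$ are independent and continuously distributed, so $\mathbb P(Y_j=Y_k)=0$. The union of these countably many events is again a null set, so almost surely all $Y_k$ are distinct. On the intersection of the full-probability events from Steps 2 and 3, the maximizer $R$ exists and is unique. It is also measurable, because
\[
\{R=r\}=\bigcap_{k\ne r}\{Y_k<Y_r\}
\]
up to a null set.

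\emph{Main obstacle.} No step is genuinely hard. The only point needing care is Step 2, where the comparison level $Y_1$ is random. This is handled by using the pathwise statement $Y_k\to0$ from Step 1 rather than a fixed-$\varepsilon$ bound.
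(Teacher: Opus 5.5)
Your proposal is correct and follows the paper's proof essentially step for step: Borel--Cantelli at levels $1/m$ to get $Y_k\to0$ almost surely, comparison with the positive random level $Y_1$ (the paper uses $Y_1/2$, which makes no difference), and pairwise ties having probability zero for uniqueness. Your explicit countable union over pairs and the measurability remark for $\{R=r\}$ only spell out points the paper leaves implicit.
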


\smallskip
\begin{proof}
For every $m\geq 1$,
\[
\sum_{k=1}^{\infty} \mathbb P\!\left(Y_k>\frac1m\right) = \sum_{k=1}^{\infty}{\rm e}^{-k/m} <\infty.
\]
By the first Borel--Cantelli lemma, for each fixed $m\ge1$, with
probability one, $Y_k>1/m$ occurs for only finitely many indices $k$.
Since $m$ ranges over a countable set, it follows that, almost surely,
for every $m\ge 1$ there exists $K_m<\infty$ such that
$Y_k\le 1/m$ for all $k\ge K_m$.
Consequently, $Y_k\to 0$ almost surely.

Since $Y_1>0$ almost surely, it follows that, with probability one, there
exists a finite random integer $N$ such that
\[
Y_k<\frac{Y_1}{2},\qquad k>N.
\]
Consequently,
\[
\sup_{k\geq1}Y_k=\max_{1\leq k\leq N}Y_k,
\]
so the supremum is finite and is attained.

Finally, since the random variables are independent and absolutely
continuous, it follows that $\mathbb P(Y_r=Y_s)=0$, $r\neq s$.
Thus the maximizer is unique almost surely.
\end{proof}

\medskip

Lemma~\ref{lem:existence_unique} shows that the events
\[
A_r:=\Big\{Y_r=\max_{k\ge1}Y_k\Big\},\qquad r\ge1,
\]
form a partition of the probability space up to a null set, and that the
probabilities $(p_r)_{r\ge1}$ are well defined.
Recall $p_{r,n}$ from \eqref{gl:defprn}.

\medskip
\begin{theorem}\label{thm:pr_convergence}
For each fixed $r\ge1$,
\begin{equation}\label{eq:pr_limit}
p_{r,n}\to p_r
\qquad \text{as } n\to\infty.
\end{equation}
Moreover, $p_r>0$ for every $r\geq 1$, and
\begin{equation}\label{eq:sum_pr}
\sum_{r=1}^\infty p_r = 1.
\end{equation}
\end{theorem}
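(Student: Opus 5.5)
The plan is to work with the infinite array $(Y_k)_{k\ge1}$ from \eqref{eq:infinite_array}, realized on one probability space so that $Y_1,\dots,Y_n$ are its first $n$ coordinates. Then $p_{r,n}=\mathbb P(A_{r,n})$ with
\[
A_{r,n}:=\Big\{Y_r=\max_{1\le k\le n}Y_k\Big\}, \qquad n\ge r .
\]
The convergence \eqref{eq:pr_limit} will come from a monotone-events argument, and the normalization \eqref{eq:sum_pr} will follow from Lemma~\ref{lem:existence_unique}.

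For \eqref{eq:pr_limit}, first note that the events $A_{r,n}$ decrease in $n$. Next, their intersection over $n\ge r$ is the event $\{Y_r\ge Y_k \text{ for all } k\}$. By Lemma~\ref{lem:existence_unique}, this agrees with $A_r$ up to a null set, since the supremum is attained and there are almost surely no ties. Continuity of probability from above then gives $p_{r,n}\downarrow p_r$.

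An analytic route works as well. In \eqref{eq:prn_u_integral}, the integrand $r u^{r-1}\prod_{k\le n,k\ne r}(1-u^k)$ decreases pointwise to $r u^{r-1}\prod_{k\ne r}(1-u^k)$ and is bounded by $r u^{r-1}$. Dominated convergence therefore yields the explicit formula
\[
p_r=\int_0^1 r u^{r-1}\frac{\Phi(u)}{1-u^r}\,{\rm d}u, \qquad \Phi(u):=\prod_{k\ge1}(1-u^k).
\]
This formula will be reused later.

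For positivity, use the integral representation. For $0<u<1$ we have $\Phi(u)>0$, because $\sum_k u^k<\infty$ makes the infinite product converge to a nonzero limit. The integrand is therefore continuous and strictly positive on $(0,1)$, so $p_r>0$.

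Finally, \eqref{eq:sum_pr} is immediate from Lemma~\ref{lem:existence_unique}. The events $A_r$, $r\ge1$, are pairwise disjoint up to null sets, because the maximizer is unique, and their union has probability one, because a maximizer exists almost surely. Countable additivity gives $\sum_r p_r=1$.

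The only delicate point, and the main obstacle, is identifying $\bigcap_n A_{r,n}$ with $A_r$. This identification is precisely where the existence of the maximum, i.e.\ $Y_k\to0$ almost surely, is needed. Without it, mass could escape to infinity, and one would only obtain $\sum_r p_r\le 1$ via Fatou from Remark~\ref{rem:normalization_finite}.
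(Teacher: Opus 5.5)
Your proof is correct. Your positivity and normalization steps coincide with the paper's; the difference is in the convergence step.

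The paper writes both $p_{r,n}$ and $p_r$ as integrals, obtaining the one for $p_r$ by conditioning on $Y_r$ in the infinite array, and then applies dominated convergence. Your main argument is instead continuity of probability from above along the decreasing events $A_{r,n}$. You then run dominated convergence the other way, \emph{deriving} the infinite-product formula for $p_r$ as the limit of the finite-$n$ integrals rather than computing it directly.

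Your route gives the monotonicity $p_{r,n}\downarrow p_r$ for free. Its convergence step also uses nothing about the exponential laws, so it carries over directly to the $m$-spacings of Section~\ref{sec:discussion}, where no product formula is available. The paper's route keeps everything inside a single integral representation, which is also the natural tool for quantifying the rate of convergence.

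One correction to your closing paragraph. The intersection $\bigcap_{n\ge r}A_{r,n}=\{Y_r\ge Y_k \text{ for all } k\ge1\}$ is \emph{exactly} the event $A_r$, because $Y_r\ge Y_k$ for all $k$ says precisely that the supremum is attained at $r$. So this identification needs neither Lemma~\ref{lem:existence_unique} nor the absence of ties, and $p_{r,n}\to p_r$ would hold even if mass escaped to infinity. The lemma is needed only for \eqref{eq:sum_pr}, which is what your Fatou remark correctly indicates.
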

\smallskip
\begin{proof}
Fix $r\ge1$. By Proposition~\ref{prop:prn_integral} and its infinite analogue,
\[
p_{r,n}
=\int_0^\infty  r {\rm e}^{-r t}
\prod_{\substack{1\le k\le n\\k\neq r}}
\bigl(1- {\rm e}^{-k t}\bigr)\,{\rm d}t,
\qquad
p_r
=\int_0^\infty  r {\rm e}^{-r t}
\prod_{\substack{k\ge1\\k\neq r}}
\bigl(1- {\rm e}^{-k t}\bigr)\,{\rm d}t.
\]
For each $t>0$, the finite product decreases monotonically in $n$ to the infinite
product. Since
\[
0\le  r{\rm e}^{-r t}
\prod_{\substack{1\le k\le n\\k\neq r}}
(1-{\rm e}^{-k t})
\le  r{\rm e}^{-r t},
\]
and $r {\rm e}^{-r t}$ is integrable on $(0,\infty)$, dominated convergence yields
\eqref{eq:pr_limit}.

Furthermore, for every $t>0$,
\[
\sum_{\substack{k\geq 1\\ k\neq r}} {\rm e}^{-kt}<\infty,
\]
and hence
\[
\prod_{\substack{k\geq 1\\ k\neq r}}
(1-{\rm e}^{-kt})>0.
\]
It follows from the integral representation above that $p_r>0$.

To prove~\eqref{eq:sum_pr}, note that by Lemma~\ref{lem:existence_unique}, with
probability one there exists a unique index $R\ge1$ such that
$Y_R=\max_{k\ge1}Y_k$. Hence
\[
\sum_{r=1}^\infty p_r
=\sum_{r=1}^\infty \mathbb P(A_r)
=\mathbb P\!\left(\bigcup_{r\ge1} A_r\right)
=1.
\]
\end{proof}

\begin{remark}\label{rem:interpretation}
Let $R$ be the almost surely unique index from Lemma~\ref{lem:existence_unique}. Then
\[
p_r=\mathbb{P}(R=r), \qquad r\geq 1.
\]
In view of \eqref{eq:pr_limit}  and \eqref{eq:sum_pr},
\[
R_n \vertk R.
\]
Thus the limiting distribution has full support on the positive
integers and, in particular, is non-degenerate.
\end{remark}


\section{Analytic representations and Euler's pentagonal numbers}\label{sec:pentagonal}

We now derive analytic representations for the limiting probabilities
$(p_r)_{r\ge1}$ and show that Euler's generalized pentagonal numbers arise naturally in this
context. Recall $p_r$ from \eqref{eq:pr_def}. By conditioning
on $Y_r$ and arguing as in Proposition~\ref{prop:prn_integral}, we obtain the integral
representation
 \[
p_r
=\int_0^\infty r {\rm e}^{-r t}
\prod_{\substack{k\ge1\\k\neq r}}
\bigl(1-{\rm e}^{-k t}\bigr)\,{\rm d}t.
\]
With the substitution $u={\rm e}^{-t}$, this becomes
\begin{equation}\label{eq:pr_u_integral}
p_r
=\int_0^1 r\,u^{r-1}
\prod_{\substack{k\ge1\\k\neq r}}(1-u^k)\,{\rm d}u.
\end{equation}

Introduce Euler's product
\[
\Phi(u):=\prod_{k=1}^\infty (1-u^k), \qquad 0<u<1.
\]
Since
\[
\prod_{\substack{k\ge1\\k\neq r}}(1-u^k)
=\frac{\Phi(u)}{1-u^r},
\]
representation~\eqref{eq:pr_u_integral} can be written in the compact form
\begin{equation}\label{eq:pr_phi}
p_r
=\int_0^1 r\,u^{r-1}\frac{\Phi(u)}{1-u^r}\,{\rm d}u,
\qquad r\ge1.
\end{equation}

Euler's pentagonal number theorem (see, e.g., \cite{and76}, Corollary 1.7) yields the expansion
\begin{equation}\label{eq:euler_pentagonal}
\Phi(u)
=1+\sum_{m=1}^{\infty}(-1)^m
\bigl(u^{g_m^-}+u^{g_m^+}\bigr),
\qquad 0<u<1,
\end{equation}
where
\[
g_m^-=\frac{m(3m-1)}{2},
\qquad
g_m^+=\frac{m(3m+1)}{2},
\]
are the generalized pentagonal numbers.

Moreover, for $r\ge1$,
\begin{equation}\label{eq:geometric}
\frac{1}{1-u^r}=\sum_{\ell=0}^\infty u^{\ell r},
\qquad 0<u<1.
\end{equation}
Substituting \eqref{eq:euler_pentagonal} and \eqref{eq:geometric} into
\eqref{eq:pr_phi} and integrating termwise yields the following explicit series
representation.


\begin{proposition}\label{prop:pr_pentagonal}
For $r\geq1$,
\begin{equation}\label{eq:pr_pentagonal}
p_r
=
r\sum_{\ell=0}^{\infty}\Biggl[
\frac{1}{r(\ell+1)}
+\sum_{m=1}^{\infty}(-1)^m
\left(
\frac{1}{r(\ell+1)+g_m^-}
+\frac{1}{r(\ell+1)+g_m^+}
\right)
\Biggr].
\end{equation}
Here, for each fixed $\ell$, the inner series with respect to $m$ is
evaluated first.
\end{proposition}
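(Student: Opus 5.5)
The plan is to start from \eqref{eq:pr_phi} and expand $1/(1-u^r)$ by \eqref{eq:geometric}. All terms of $\sum_{\ell\ge0} r u^{r-1}u^{\ell r}\Phi(u)$ are nonnegative on $(0,1)$ because $0\le\Phi(u)\le 1$. Monotone convergence (Tonelli) therefore lets us interchange sum and integral:
\[
p_r=\sum_{\ell=0}^\infty r\int_0^1 u^{r(\ell+1)-1}\Phi(u)\,{\rm d}u .
\]
This step is unproblematic and already fixes the order of summation demanded in the statement: the outer sum over $\ell$ is taken last.

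For each fixed $\ell$, set $a:=r(\ell+1)\ge1$. We then need the identity
\[
\int_0^1 u^{a-1}\Phi(u)\,{\rm d}u=\frac1a+\sum_{m=1}^\infty(-1)^m\left(\frac{1}{a+g_m^-}+\frac{1}{a+g_m^+}\right).
\]
Formally it follows by inserting \eqref{eq:euler_pentagonal} and integrating term by term, since $\int_0^1u^{a-1+g}\,{\rm d}u=1/(a+g)$. This interchange is the main obstacle. The pentagonal series does not converge absolutely uniformly up to $u=1$, and the termwise integrated series $\sum_m\pm 1/(a+g_m^\pm)$ is only conditionally convergent as written; in fact it converges absolutely, since $1/(a+g_m^\pm)=O(m^{-2})$, but dominated convergence on the partial sums of $\Phi$ still needs justification near $u=1$.

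To handle it, I would group the partial sums as
\[
P_M(u):=1+\sum_{m=1}^M(-1)^m\bigl(u^{g_m^-}+u^{g_m^+}\bigr).
\]
Then $P_M(u)\to\Phi(u)$ for every $u\in(0,1)$. The needed domination is a uniform bound $|P_M(u)|\le C$ on $[0,1]$. To get it, pair the terms for consecutive $m$ as an alternating sum of the decreasing sequence of powers $u^{g}$ with $g$ running through the generalized pentagonal numbers in increasing order $0<1<2<5<7<12<\cdots$, with signs $+,-,-,+,+,-,-,\dots$.

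One way to bound it is to write $P_M(u)=\sum_{m=-M}^{M}(-1)^m u^{m(3m-1)/2}$ and take differences in pairs, $(-1)^m\bigl(u^{g_m^-}-u^{g_{m+1}^-}\bigr)$ and similarly for the $+$ branch. Grouping the $m$-th and $(m+1)$-th terms of each branch gives blocks of alternating sign whose absolute values are $u^{g}-u^{g'}$ with $g<g'$ increasing. The resulting partial sums are bounded by $2$ by the alternating-series (Leibniz) argument applied separately to the $-$ and $+$ branches, each being an alternating series with decreasing terms $u^{g_m^\mp}$ in $m$. This gives $|P_M(u)|\le 3$ on $[0,1]$.

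With $u^{a-1}\cdot 3$ integrable, dominated convergence yields
\[
\int_0^1u^{a-1}P_M(u)\,{\rm d}u\to\int_0^1u^{a-1}\Phi(u)\,{\rm d}u .
\]
The left side equals the $M$-th partial sum of the inner series. Multiplying by $r$ and summing over $\ell$ then gives \eqref{eq:pr_pentagonal}, where the convergence of the outer series is inherited from the monotone convergence step.
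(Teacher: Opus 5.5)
Your proof is correct and has the same structure as the paper's: monotone convergence to pull out the sum over $\ell$, then termwise integration of the pentagonal expansion for fixed $a=r(\ell+1)$. It differs only in how the termwise integration is justified.

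The paper uses exactly the absolute convergence you noted, $\sum_{m\ge1}\bigl(\frac{1}{a+g_m^-}+\frac{1}{a+g_m^+}\bigr)\le 2\sum_{m\ge1}m^{-2}<\infty$, together with Fubini. That observation already settles your worry near $u=1$: by Tonelli, $u^{a-1}\bigl(1+\sum_{m\ge1}(u^{g_m^-}+u^{g_m^+})\bigr)$ is an integrable majorant of every $u^{a-1}|P_M(u)|$.

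You instead bound $P_M$ uniformly and use the majorant $3u^{a-1}$, which is also valid. The fact actually needed is just the Leibniz bound $-u^{g_1^\mp}\le\sum_{m=1}^M(-1)^m u^{g_m^\mp}\le 0$ for the nonincreasing sequence $m\mapsto u^{g_m^\mp}$. It even gives $1-u-u^2\le P_M(u)\le 1$. Your description of the pairing is slightly off: grouping terms $m=2k-1,2k$ within a branch gives blocks $-\bigl(u^{g_{2k-1}^\mp}-u^{g_{2k}^\mp}\bigr)\le 0$ of constant sign, not alternating sign.

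As for what each route buys: the paper's argument is shorter and needs only $g_m^\pm\ge m^2$. Yours uses the sign structure of the pentagonal series and gives a bound that does not depend on the weight. It would therefore still work with $u^{a-1}$ replaced by any integrable weight $w$ on $(0,1)$, e.g.\ $w(u)=(1-u)^{-1/2}$. For that weight the Fubini criterion fails, because $\sum_m u^{g_m^\pm}$ grows like $(1-u)^{-1/2}$ as $u\uparrow 1$.
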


\begin{proof}
Using the geometric expansion \eqref{eq:geometric} in
\eqref{eq:pr_phi}, we obtain
\[
p_r
=
r\int_0^1
u^{r-1}\Phi(u)
\sum_{\ell=0}^{\infty}u^{\ell r}\,{\rm d}u.
\]
Since $\Phi(u)\geq0$ for $0<u<1$, the monotone convergence theorem
yields
\[
p_r
=
r\sum_{\ell=0}^{\infty}
\int_0^1 u^{r(\ell+1)-1}\Phi(u)\,{\rm d}u.
\]

Fix $\ell\geq0$ and put
\[
a_\ell:=r(\ell+1).
\]
By Euler's pentagonal number theorem,
\[
u^{a_\ell-1}\Phi(u)
=
u^{a_\ell-1}
+
\sum_{m=1}^{\infty}(-1)^m
\left(
u^{a_\ell+g_m^- -1}
+
u^{a_\ell+g_m^+ -1}
\right).
\]
Moreover,
\begin{align*}
\sum_{m=1}^{\infty}
\int_0^1
u^{a_\ell-1}
\left(
u^{g_m^-}+u^{g_m^+}
\right)\,{\rm d}u
&=
\sum_{m=1}^{\infty}
\left(
\frac{1}{a_\ell+g_m^-}
+
\frac{1}{a_\ell+g_m^+}
\right)\\
&\leq
2\sum_{m=1}^{\infty}\frac{1}{m^2}
<\infty,
\end{align*}
where we have used $g_m^-\geq m^2$ and $g_m^+\geq m^2$.
Consequently, Fubini's theorem permits termwise integration of the
pentagonal expansion, and hence
\[
\int_0^1u^{a_\ell-1}\Phi(u)\,{\rm d}u
=
\frac{1}{a_\ell}
+
\sum_{m=1}^{\infty}(-1)^m
\left(
\frac{1}{a_\ell+g_m^-}
+
\frac{1}{a_\ell+g_m^+}
\right).
\]
Substituting $a_\ell=r(\ell+1)$ into the preceding representation and
then summing over $\ell$ gives \eqref{eq:pr_pentagonal}.
\end{proof}

\begin{remark}\label{rem:pentagonal_interpretation}
Representation~\eqref{eq:pr_pentagonal} shows that the generalized pentagonal
numbers govern the distribution of the maximizer of the largest exponential
spacing. The same Euler product $\Phi(u)$ appears in the limiting distribution of
the maximal spacing itself; cf.\ \cite{henze87}. Thus, the arithmetic structure
previously observed for the \emph{size} of the largest spacing also controls the
\emph{location} at which this maximum is attained.
\end{remark}

\section{Numerical illustration}\label{sec:numerics}

In this section we provide a brief numerical illustration of the limiting
distribution $(p_r)_{r\ge1}$ and of the convergence of the finite-sample
probabilities $p_{r,n}$ to their limits.

The integral representation~\eqref{eq:pr_phi} and the pentagonal expansion
\eqref{eq:pr_pentagonal} allow for accurate numerical evaluation of the limiting
probabilities $p_r$. Table~\ref{tab:pr_values} lists approximate values of $p_r$ for
$r=1,\dots,10$.

For Table~\ref{tab:pr_values}, the infinite product in \eqref{eq:pr_u_integral} was truncated at $k=100$.
Repeating the computation with truncation at $k=200$ changed each
displayed probability by less than $2\times10^{-10}$.

\begin{table}[h]
\centering
\begin{tabular}{c c @{\hspace{8mm}} c c}
\hline
$r$ & $p_r$ & $r$ & $p_r$ \\[1mm]
\hline
1 & 0.5161 & 6  & 0.0221 \\
2 & 0.2132 & 7  & 0.0142 \\
3 & 0.1073 & 8  & 0.0094 \\
4 & 0.0598 & 9  & 0.0064 \\
5 & 0.0355 & 10 & 0.0044 \\
\hline
\end{tabular}
\caption{Approximate values of the limiting probabilities $p_r$.}
\label{tab:pr_values}
\end{table}

These values were obtained from the exact integral representation
\eqref{eq:pr_u_integral} by truncating the infinite product and evaluating the resulting integral numerically.
The truncation error is negligible at the displayed precision; all values are accurate to at least four decimal places.

The distribution is clearly concentrated on small values of $r$, indicating that
the largest spacing is asymptotically most likely to occur among the last few
spacings near the right endpoint. In particular, $p_1\approx 0.516$,
so that with probability slightly larger than one half the largest spacing is the
final spacing.

To illustrate the convergence $p_{r,n}\to p_r$, Table~\ref{tab:prn_values} reports
finite-sample probabilities for $n=10$ and $n=20$, computed from the exact integral
representation~\eqref{eq:prn_phi}.

\begin{table}[h]
\centering
\begin{tabular}{c c c c}
\hline
$r$ & $p_{r,10}$ & $p_{r,20}$ & $p_r$ \\[1mm]
\hline
1 & 0.5184 & 0.5162 & 0.5161 \\
2 & 0.2152 & 0.2133 & 0.2132 \\
3 & 0.1089 & 0.1074 & 0.1073 \\
4 & 0.0611 & 0.0598 & 0.0598 \\
5 & 0.0366 & 0.0356 & 0.0355 \\
\hline
\end{tabular}
\caption{Finite-sample probabilities $p_{r,n}$ and their limits $p_r$.}
\label{tab:prn_values}
\end{table}

Even for moderate sample sizes, the agreement between $p_{r,n}$ and $p_r$ is already
remarkably close. This illustrates the rapid numerical convergence in this example
and confirms that the limiting distribution provides
an accurate description of the location of the largest spacing for practical sample
sizes.


\section{Extension to fixed $m$-spacings}\label{sec:discussion}


The preceding sections concern ordinary spacings, corresponding to the case
$m=1$. We now show that the infinite-sequence approach also yields a limiting
distribution for the location of the largest fixed $m$-spacing. Although
overlapping $m$-spacings are dependent when $m\geq 2$, this dependence does not
obstruct the existence of a limiting reverse-index distribution.

For a fixed integer $m\geq 1$ and $n\geq m$, define the $m$-spacings by
\[
S^{(m)}_{j,n} := X_{j:n}-X_{j-m:n} = \sum_{\ell=0}^{m-1} S_{j-\ell,n}, \qquad j=m,\dots,n,
\]
and let
\[
I_n^{(m)} := \operatorname*{arg\,max}_{j=m,\dots,n} S^{(m)}_{j,n}
\]
denote the almost surely unique position of the largest $m$-spacing. As in
the case $m=1$, we measure its position from the right endpoint and put
\begin{equation}
R_n^{(m)} := n-I_n^{(m)}+1.
\label{eq:Rnm}
\end{equation}
Thus $R_n^{(m)}$ takes values in $\{1,\dots,n-m+1\}$. Notice that
\[
S^{(1)}_{j,n}=S_{j,n}, \qquad I_n^{(1)}=I_n, \qquad R_n^{(1)}=R_n.
\]
Exponential $m$-spacings have been studied, among others, by \cite{bsn23} and \cite{rifi18}, while asymptotic results for lower
exponential spacings are given by \cite{best20}. The limiting distribution of the location of the largest
 $m$-spacing appears not to have been studied
for $m\geq 2$.

Let $(Y_r)_{r\geq 1}$ be the independent sequence introduced in \eqref{eq:infinite_array},
so that $Y_r\edist \operatorname{Exp}(r)$, and define
\begin{equation}
Z_r^{(m)} := \sum_{\ell=0}^{m-1}Y_{r+\ell}, \qquad r\geq 1.
\label{eq:Zrm}
\end{equation}

\medskip

\begin{theorem}\label{thm:mspacings}
Let $m\geq 1$ be fixed. With probability one, there exists a unique random index
\[
R^{(m)} := \operatorname*{arg\,max}_{r\geq 1} Z_r^{(m)}.
\]
Moreover,
\[
R_n^{(m)} \vertk R^{(m)} \qquad\text{as } n\to\infty.
\]
The limiting distribution has full support on the positive integers. More
precisely, if
\[
q_r^{(m)} := \mathbb{P}\bigl(R^{(m)}=r\bigr), \qquad r\geq 1,
\]
then
\[
q_r^{(m)}>0 \quad\text{for every }r\geq 1, \qquad \sum_{r=1}^{\infty}q_r^{(m)}=1.
\]
\end{theorem}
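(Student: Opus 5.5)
We follow the scheme used for $m=1$ (Lemma~\ref{lem:existence_unique}, Theorem~\ref{thm:pr_convergence}): first existence and uniqueness of the maximizer in the infinite array, then a coupling that identifies $R_n^{(m)}$ with a finite-window argmax, and finally positivity of each $q_r^{(m)}$.

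\emph{Reduction.} As in Section~\ref{sec:model}, put $Y_r=S_{n-r+1,n}$ for $1\le r\le n$. For $j=m,\dots,n$ and $r=n-j+1$ we get
\[
S^{(m)}_{j,n}=\sum_{\ell=0}^{m-1}S_{j-\ell,n}=\sum_{\ell=0}^{m-1}Y_{r+\ell}=Z^{(m)}_r ,
\]
and $r$ ranges over $\{1,\dots,n-m+1\}$. Hence $R_n^{(m)}$ has the same distribution as
\[
R^{(m)}_{(n)}:=\operatorname*{arg\,max}_{1\le r\le n-m+1}Z_r^{(m)},
\]
computed from the single infinite sequence $(Y_r)_{r\ge1}$ of \eqref{eq:infinite_array}. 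The law of $R_n^{(m)}$ depends only on the law of $(Y_1,\dots,Y_n)$, so all $n$ can be realized on one probability space.

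\emph{Existence and uniqueness.} The proof of Lemma~\ref{lem:existence_unique} shows $Y_k\to0$ a.s., hence $Z_r^{(m)}\to0$ a.s. Since $Z_1^{(m)}\ge Y_1>0$, there is an a.s.\ finite $N$ with $Z^{(m)}_r<Z^{(m)}_1/2$ for all $r>N$. The supremum is therefore attained on $\{1,\dots,N\}$.

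For uniqueness we need $\mathbb P(Z^{(m)}_r=Z^{(m)}_s)=0$ for $r<s$. The difference $Z^{(m)}_r-Z^{(m)}_s$ contains $Y_r$ with coefficient $+1$, and $Y_r$ does not occur in $Z_s^{(m)}$. So it equals $Y_r+W$ with $W$ independent of $Y_r$, and this has a density because $Y_r$ is absolutely continuous. A countable union over pairs $(r,s)$ gives a.s.\ uniqueness.

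\emph{Convergence.} On the almost sure event above, once $n-m+1\ge N$ the finite argmax $R^{(m)}_{(n)}$ coincides with $R^{(m)}$, since the global maximizer lies in the window and is unique. Thus $R^{(m)}_{(n)}\to R^{(m)}$ a.s., hence in distribution. Since $R_n^{(m)}\edist R^{(m)}_{(n)}$, we get $R_n^{(m)}\vertk R^{(m)}$. Uniqueness gives $\sum_r q^{(m)}_r=\mathbb P(R^{(m)}<\infty)=1$.

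\emph{Positivity of $q_r^{(m)}$.} This is the main obstacle: the $Z$'s overlap, so no product formula like \eqref{eq:pr_u_integral} is available. Fix $r$ and exhibit a positive-probability event on which $R^{(m)}=r$. Let $T$ be the $\sigma$-field generated by $(Y_k)_{k\ne r}$. Every $Z^{(m)}_s$ with $r-m+1\le s\le r$ contains $Y_r$, and every other $Z_s^{(m)}$ is $T$-measurable. Define
\[
A:=\{Y_k<\varepsilon \text{ for all } k\ne r\}.
\]
This event has positive probability, because
\[
\prod_{k\ne r}(1-{\rm e}^{-k\varepsilon})>0
\]
by convergence of $\sum_k {\rm e}^{-k\varepsilon}$, as in the proof of Theorem~\ref{thm:pr_convergence}. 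On $A$, all $Z_s^{(m)}$ not containing $Y_r$ are $<m\varepsilon$.

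Among the windows that contain $Y_r$, we need $Z_r^{(m)}$ itself to win. The difference $Z^{(m)}_r-Z^{(m)}_s$ for $r-m+1\le s<r$ involves only the $Y_k$ with $k\ne r$, so on $A$ we cannot control its sign. Restrict further to
\[
B:=A\cap\{Y_{r+\ell}\in(\varepsilon/2,\varepsilon),\ 1\le\ell\le m-1\}\cap\{Y_k<\delta \text{ for } k<r,\ k\ne r\},
\]
with $\delta\,(m-1)<\varepsilon/2$. On $B$, for each $s<r$ with $s\ge r-m+1$, $Z^{(m)}_s$ replaces the terms $Y_{s+m},\dots,Y_{r+m-1}$, each greater than $\varepsilon/2$, by the terms $Y_s,\dots,Y_{r-1}$, each less than $\delta$. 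Hence $Z_s^{(m)}<Z_r^{(m)}$. When $r\le m-1$ some of these windows do not exist, and the argument only becomes simpler. The event $B$ has positive probability by independence, since each constraint is a nondegenerate interval and the infinite tail is handled as before.

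Finally, intersect $B$ with $\{Y_r>m\varepsilon\}$, which is independent of $B$ and has positive probability. On this intersection $Z_r^{(m)}>m\varepsilon$, which beats every $Z_s^{(m)}$ not containing $Y_r$. Therefore $q_r^{(m)}>0$.
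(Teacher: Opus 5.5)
Your proof is correct. Everything up to positivity matches the paper's proof: the coupling $R_n^{(m)}\edist\argmax_{1\le r\le n-m+1}Z_r^{(m)}$ on one infinite sequence, existence from $Y_k\to0$ a.s., uniqueness via the variable $Y_r$ that lies in $Z_r^{(m)}$ but not in $Z_s^{(m)}$ for $s>r$, and a.s.\ stabilization of the finite argmax.

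You depart from the paper only in showing $q_r^{(m)}>0$. The paper uses the single symmetric event $\{Y_k>a \text{ for } k\in B_r\}\cap\{Y_k<\varepsilon \text{ for } k\notin B_r\}$, with $B_r=\{r,\dots,r+m-1\}$. Because all blocks have size $m$, it gets $Z_r^{(m)}-Z_s^{(m)}=\sum_{k\in B_r\setminus B_s}Y_k-\sum_{k\in B_s\setminus B_r}Y_k>\lvert B_r\setminus B_s\rvert(a-\varepsilon)>0$ for every $s\ne r$ at once, whether or not the windows overlap.

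You instead make only $Y_r$ dominant and split the competitors into two groups. Windows not containing $Y_r$ lose because they are $<m\varepsilon<Y_r$. The at most $m-1$ windows with $r-m+1\le s<r$ need extra flank conditions ($Y_k<\delta$ for $k<r$ and $Y_{r+\ell}\in(\varepsilon/2,\varepsilon)$), so that the $r-s$ exchanged terms favour $Z_r^{(m)}$. Both events have positive probability for the same reason: finitely many nondegenerate constraints times a convergent tail product $\prod(1-{\rm e}^{-k\varepsilon})$.

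The paper's route is shorter and avoids the case split, because the equal-cardinality cancellation treats all competitors uniformly. Yours makes explicit which windows genuinely compete with $Z_r^{(m)}$, at the cost of a three-scale configuration. One small point: since equally many terms are exchanged, $\delta<\varepsilon/2$ already suffices in place of $\delta(m-1)<\varepsilon/2$.
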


\smallskip

\begin{proof}
For $1\leq r\leq n-m+1$,
\[
S^{(m)}_{n-r+1,n} = \sum_{\ell=0}^{m-1}S_{n-r+1-\ell,n}.
\]
Consequently, the exponential-spacing representation gives the joint distributional identity
\[
\left(S^{(m)}_{n-r+1,n} \right)_{1\leq r\leq n-m+1} \edist \left(Z_r^{(m)}\right)_{1\leq r\leq n-m+1}.
\]
It follows that
\[
R_n^{(m)} \edist \widetilde R_n^{(m)}
:= \operatorname*{arg\,max}_{1\leq r\leq n-m+1}Z_r^{(m)}.
\]

For every integer $j\geq 1$,
\[
\sum_{k=1}^{\infty} \mathbb{P}\left(Y_k>\frac{1}{j}\right) = \sum_{k=1}^{\infty}{\rm e}^{-k/j} < \infty.
\]
The first Borel--Cantelli lemma therefore implies that
$Y_k \to 0$ almost surely. Since $m$ is fixed, it follows that
\[
Z_r^{(m)} = \sum_{\ell=0}^{m-1}Y_{r+\ell} \to 0 \qquad\text{almost surely as }r\to\infty.
\]
On the other hand, $Z_1^{(m)}>0$ almost surely. Hence, with probability one, there exists a finite random integer $N$ such that
\[
Z_r^{(m)}<\frac{Z_1^{(m)}}{2}, \qquad r>N.
\]
Thus the supremum of $(Z_r^{(m)})_{r\geq 1}$ is attained among finitely many indices.

To prove uniqueness, let $r<s$. The variable $Y_r$ occurs in $Z_r^{(m)}$
but not in $Z_s^{(m)}$. Conditioning on all variables other than $Y_r$, and using the absolute continuity of $Y_r$, gives
\[
\mathbb{P}\left(Z_r^{(m)}=Z_s^{(m)}\right)=0.
\]
A countable union over all pairs $r<s$ shows that the maximizer is unique almost surely.

Since $R^{(m)}$ is finite almost surely, we have  $\widetilde R_n^{(m)}=R^{(m)}$
whenever $n-m+1\geq R^{(m)}$. Consequently, $\widetilde R_n^{(m)} \to R^{(m)}$ almost surely,
and the distributional identity above yields $R_n^{(m)} \vertk R^{(m)}$.

It remains to prove that the limiting distribution has full support. Fix
$r\geq 1$ and choose numbers $0<\varepsilon<a$. Put
$B_r:=\{r,\dots,r+m-1\}$ and consider the event
\[
E_r := \left\{ Y_k>a \text{ for every }k\in B_r \right\} \cap \left\{ Y_k<\varepsilon \text{ for every }k\notin B_r \right\}.
\]
By independence,
\[
\mathbb{P}(E_r) = \prod_{k\in B_r}{\rm e}^{-ka}
\prod_{\substack{k\geq 1\\ k\notin B_r}} \left(1-{\rm e}^{-k\varepsilon}\right) >0,
\]
because
\[
\sum_{\substack{k\geq 1\\ k\notin B_r}} {\rm e}^{-k\varepsilon} < \infty.
\]

For $s\neq r$, let $B_s:=\{s,\dots,s+m-1\}$. On $E_r$, the common terms in $Z_r^{(m)}$ and $Z_s^{(m)}$ cancel, and
\[
Z_r^{(m)}-Z_s^{(m)}
 = \sum_{k\in B_r\setminus B_s}Y_k - \sum_{k\in B_s\setminus B_r}Y_k
 > \lvert B_r\setminus B_s\rvert(a-\varepsilon) > 0.
\]
Thus $E_r\subseteq\{R^{(m)}=r\}$, and hence $q_r^{(m)}>0$. Finally, $\sum_{r\geq 1}q_r^{(m)}=1$ follows from the almost sure existence and
uniqueness of $R^{(m)}$.
\end{proof}

Theorem~\ref{thm:mspacings} settles the existence of the limiting
distribution. The dependence among overlapping $m$-spacings does, however,
make an explicit characterization of the probabilities $q_r^{(m)}$
substantially more difficult than in the case $m=1$.

For $m=2$ and $m=3$, we simulated the distribution of $R_n^{(m)}$
for sample sizes $n=20$ and $n=50$, using $2\times 10^7$ independent
Monte Carlo replications for each setting. For $m=2$, the estimated
values of $\mathbb{P}(R_n^{(2)}=1)$ were $0.6449$ and $0.6448$ for
$n=20$ and $n=50$, respectively. For $m=3$, the corresponding
estimates were $0.7221$ and $0.7221$. The Monte Carlo standard errors
were at most $1.1\times 10^{-4}$.

These observations motivate the following conjecture.

\begin{conjecture}\label{conj:mspacings}
For every fixed integer $m\geq 2$, the limiting distribution of
$R^{(m)}$ has a mode at $1$; that is,
\[
q_1^{(m)}
=
\max_{r\geq 1}q_r^{(m)}.
\]
\end{conjecture}

It would be of interest to obtain explicit representations for the probabilities $q_r^{(m)}$, analogous to those obtained in Section~\ref{sec:pentagonal} for
$m=1$, and to establish Conjecture~\ref{conj:mspacings}. We hope that the present results provide a useful complement to the existing literature on
exponential spacings and stimulate further work on extremal properties of inhomogeneous random arrays.

\end{document}